\newtheorem{assumption}{Assumption}
\begin{document}

\title{Lipschitz-Free Mirror Descent Methods for Non-Smooth Optimization Problems}


\titlerunning{Lipschitz-Free Mirror Descent Methods for Non-Smooth Optimization Problems}    

\author{Bowen Yuan, Mohammad S. Alkousa}
\authorrunning{Bowen Yuan, Mohammad S. Alkousa} 

\institute{Bowen Yuan \at School of Mathematical Sciences, Beihang University, Beijing 100191, People's Republic of China.
\email{yuanbowen@buaa.edu.cn}
\\
Mohammad S. Alkousa \at Innopolis University, Innopolis, Universitetskaya Str., 1, 420500, Russia.
\email{m.alkousa@innopolis.ru}
	  }     

\date{Received: date / Accepted: date}
\maketitle

\begin{abstract}
The part of the analysis of the convergence rate of the mirror descent method that is connected with the adaptive time-varying step size rules due to Alkousa et al. \cite{alkousa2024mirror} is corrected. Moreover, a Lipschitz-free mirror descent method that achieves weak ergodic convergence is presented, generalizing the convergence results of the mirror descent method in the absence of the Lipschitz assumption. 
\keywords{Mirror descent \and Mirror-C descent \and Time-varying step size \and Non-smooth convex optimization \and Optimal convergence rate}
	\subclass{90C25, 90C30}
\end{abstract}

\section{Introduction}
{\color{black}Mirror descent method originated in \cite{Nemirovsky1983Complexity,Nemirovskii1979efficient} and was later analyzed in \cite{Beck2003Mirror}. It is considered as the non-Euclidean extension of the standard subgradient method by employing a nonlinear distance function with an optimal step size in the nonlinear projection step. The mirror descent method is also applicable to optimization problems in Banach spaces where gradient descent is not \cite{article:doan_2019}. An extension of the mirror descent method for constrained problems was proposed in \cite{Nemirovsky1983Complexity,article:beck_comirror_2010}. This method is used in many applications, see \cite{applications_tomography_2001,article:Nazin_2014} and references therein.}

One of the focuses of the research on the projected subgradient method and the mirror descent method is the convergence rate of the algorithms under different step size rules. When the step size is fixed as $\gamma_k = O(1)/\sqrt{N}$, where $N$ denotes the maximum number of iterations, the optimal convergence rate of the projected subgradient method has been established at $O\left(N^{-1/2}\right)$. Moreover, the projected subgradient method has been considered to have only sub-optimal convergence rate $O(N^{-1/2}\log N)$ for time-varying step sizes $\gamma_k=O(1)/\sqrt{k}$, which is well-documented in \cite{nesterov2004lectures,nesterov2018lectures,bubeck2015convex}.

By proposing a new weighting scheme for the iteration points, which is called weak ergodic convergence, Z. Zhu et al. \cite{Zhu} first proved that the projected subgradient method can also achieve the optimal convergence rate of $O(N^{-1/2})$ under the time-varying step size rules. Based on the result of \cite{Zhu}, Alkousa et al. proved that the optimal convergence rate under the time-varying step sizes still holds for the mirror descent method \cite{alkousa2024mirror}.

By devising a novel class of time-varying step size rules, Y. Xia et al. \cite{Lipschitz-free} successfully extended the convergence results of the projected subgradient method to the non-Lipschitz case. In this paper, we aim to further generalize the findings of \cite{Lipschitz-free} to the mirror descent method. Specifically, we will establish the optimal convergence results of the mirror descent method under more general conditions, without relying on the listed Assumption \ref{ass} below.

\section{{\color{black}Fundamentals for} the mirror descent method}

{\color{black}
Let $(\mathbf{E},\|\cdot\|)$ be a normed finite-dimensional vector space, with an arbitrary norm $\|\cdot\|$, and $\mathbf{E}^*$ be the conjugate space of $\mathbf{E}$ with the following norm
$$
    \|y\|_{*}=\max\limits_{x \in \mathbf{E}}\{\langle y,x\rangle: \|x\|\leq1\},
$$
where $\langle y,x\rangle$ is the value of the continuous linear functional $y \in \mathbf{E}^*$ at $x \in \mathbf{E}$.

Let $Q \subset \mathbf{E}$ be a compact convex set, and 
$\psi: Q \longrightarrow \mathbb{R}$ be a proper closed differentiable and $\sigma$-strongly convex (called prox-function or distance generating function) with $\sigma > 0 $. The corresponding Bregman divergence is defined as 
$$
    V_{\psi} (x, y) = \psi (x) - \psi (y) - \langle \nabla \psi (y),  x -y \rangle \quad \forall x , y \in Q. 
$$
For the Bregman divergence, it holds the following inequality
\begin{equation}\label{eq_breg}
    V_{\psi} (x, y) \geq \frac{\sigma}{2} \|y - x\|^2 \quad \forall x, y \in Q. 
\end{equation}

In what follows, we denote the subdifferential of $f$ at $x$ by $\partial f(x)$, and the subgradient of $f$ at any point $x$ by $\nabla f(x) \in \partial f(x)$.   Let $\operatorname{dom} f$ denote the domain of the function $f$, and $\operatorname{dom} (\partial f)$ denote the set of points of subdifferentiability of $f$, i.e., 
$$
    \operatorname{dom} (\partial f) = \left\{ x \in  \mathbf{E} : \partial f(x) \ne \emptyset  \right\}. 
$$

The following identity, known as the three-point identity, is essential in the analysis of the mirror descent method.

\begin{lemma}{\bf{(Three-point lemma)}\cite{beck2017first}}\label{three_point}
Suppose that $\psi: \mathbf{E} \longrightarrow (- \infty, \infty]$ is proper
closed, convex, and differentiable over $\operatorname{dom}(\partial \psi)$. Let $a, b \in \operatorname{dom}(\partial(\psi))$ and $c \in \operatorname{dom} (\psi)$. Then it holds 
\begin{equation}\label{eq_three_points}
    \langle\nabla \psi(b)-\nabla \psi(a), c-a\rangle=V_{\psi}(c,a)+V_{\psi}(a, b)-V_{\psi}(c, b).
\end{equation}
\end{lemma}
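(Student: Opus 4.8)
The plan is to establish the identity \eqref{eq_three_points} by the most direct route available: substitute the definition of the Bregman divergence into each of the three terms appearing on its right-hand side and verify that the expression collapses to the left-hand side. Because $a,b\in\operatorname{dom}(\partial\psi)$ and $\psi$ is differentiable there, while $c\in\operatorname{dom}(\psi)$, each of the quantities $V_\psi(c,a)$, $V_\psi(a,b)$, and $V_\psi(c,b)$ is well defined, so every manipulation below is legitimate. No convexity or strong-convexity property of $\psi$ will actually be invoked; the statement is a purely algebraic identity, and these hypotheses are merely inherited from the ambient setting.

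First I would expand the three divergences explicitly from the definition $V_\psi(x,y)=\psi(x)-\psi(y)-\langle\nabla\psi(y),x-y\rangle$, obtaining
\[
\begin{gathered}
V_\psi(c,a)=\psi(c)-\psi(a)-\langle\nabla\psi(a),c-a\rangle,\\
V_\psi(a,b)=\psi(a)-\psi(b)-\langle\nabla\psi(b),a-b\rangle,\\
V_\psi(c,b)=\psi(c)-\psi(b)-\langle\nabla\psi(b),c-b\rangle.
\end{gathered}
\]
The decisive observation is that when these are combined in the pattern $V_\psi(c,a)+V_\psi(a,b)-V_\psi(c,b)$ prescribed by \eqref{eq_three_points}, the six scalar function values cancel in pairs: $\pm\psi(c)$ cancel, $\pm\psi(a)$ cancel, and $\pm\psi(b)$ cancel. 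Consequently only the three inner-product terms remain to be examined.

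Next I would consolidate the surviving inner products. The two terms carrying $\nabla\psi(b)$ combine by linearity of the pairing $\langle\cdot,\cdot\rangle$ in its second argument, using $(c-b)-(a-b)=c-a$, to give $-\langle\nabla\psi(b),a-b\rangle+\langle\nabla\psi(b),c-b\rangle=\langle\nabla\psi(b),c-a\rangle$. Adding the remaining term $-\langle\nabla\psi(a),c-a\rangle$ and factoring out $c-a$ then yields $\langle\nabla\psi(b)-\nabla\psi(a),c-a\rangle$, which is precisely the left-hand side of \eqref{eq_three_points}, completing the argument.

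I do not anticipate any genuine analytic obstacle here, since the proof is a finite algebraic computation valid for any differentiable $\psi$. The only point demanding care is the bookkeeping of signs dictated by the $+,+,-$ pattern: a single sign error in expanding $V_\psi(c,b)$ would spoil the clean cancellation of the function values, so I would double-check that subtraction when writing out the expansion.
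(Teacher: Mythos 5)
Your proof is correct: the expansion of the three Bregman divergences, the pairwise cancellation of the function values $\psi(a),\psi(b),\psi(c)$, and the regrouping of the inner products via $(c-b)-(a-b)=c-a$ all check out, and you rightly observe that no convexity is needed. The paper itself gives no proof (it cites the lemma from Beck's textbook), and your direct algebraic verification is precisely the standard argument found there, so there is nothing to add.
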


The following lemma is an extension of the second prox theorem to the case of non-Euclidean distances.
\begin{lemma}{\bf{ (Non-Euclidean second prox theorem)}\cite{beck2017first}}\label{prox_non}
    Let 
\begin{enumerate}
\item $\psi: \mathbf{E} \longrightarrow (-\infty, \infty]$ be a proper closed and convex function differentiable over $\operatorname{dom}(\partial \psi)$,

\item $\varphi: \mathbf{E} \longrightarrow (-\infty, \infty]$ be a proper closed and convex function satisfying  $\operatorname{dom}(\partial \varphi) \subseteq \operatorname{dom}(\partial \psi)$,

\item $\psi + \mathbb{I}_{\operatorname{dom}(\varphi)}$ be a $\sigma$-strongly convex, where $\mathbb{I}_{A}$ is the indicator function of the set $A$. 
\end{enumerate}
Assume that $b \in \operatorname{dom}(\partial \psi)$, and let $a$ be defined by
$$
    a = \arg\min_{x \in \mathbf{E}} \left\{ \varphi(x) + V_{\psi}(x, b) \right\}. 
$$
Then 
we have
\begin{equation}\label{eq_main_lemma}
    \left\langle \nabla \psi(b) - \nabla \psi(a), u - a \right\rangle \leq \varphi(u) - \varphi(a)	\quad \forall u \in \operatorname{dom}(\varphi).
\end{equation}
\end{lemma}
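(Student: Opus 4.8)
The plan is to reduce the statement to Fermat's optimality rule for the strongly convex minimization problem that defines $a$, followed by the subgradient inequality for $\varphi$. First I would record that the objective $h(x) := \varphi(x) + V_{\psi}(x,b)$ is $\sigma$-strongly convex on $\operatorname{dom}(\varphi)$: by the definition of the Bregman divergence, $V_{\psi}(x,b) = \psi(x) - \psi(b) - \langle \nabla \psi(b), x - b\rangle$, so $h$ differs from $\varphi + \psi$ only by an affine term, and the hypothesis that $\psi + \mathbb{I}_{\operatorname{dom}(\varphi)}$ is $\sigma$-strongly convex guarantees that the minimizer $a$ exists and is unique.

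Next I would apply Fermat's rule: since $a$ minimizes the convex function $h$ over $\mathbf{E}$, the optimality condition is $0 \in \partial h(a)$. Expanding $h(x) = \varphi(x) + \psi(x) - \langle \nabla \psi(b), x\rangle + \text{const}$, the functional $\langle \nabla \psi(b), \cdot\rangle$ is smooth with constant gradient $\nabla \psi(b)$ and the constant term drops out. Applying the subdifferential sum rule, I would write $\partial h(a) = \partial \varphi(a) + \nabla \psi(a) - \nabla \psi(b)$, where the crucial point is that $a \in \operatorname{dom}(\partial \psi)$, so that $\psi$ is differentiable at $a$ and $\nabla \psi(a)$ is well defined.

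From $0 \in \partial \varphi(a) + \nabla \psi(a) - \nabla \psi(b)$ I would conclude $\nabla \psi(b) - \nabla \psi(a) \in \partial \varphi(a)$. The definition of the subdifferential of the convex function $\varphi$ at $a$ then yields, for every $u \in \operatorname{dom}(\varphi)$, the inequality $\varphi(u) \geq \varphi(a) + \langle \nabla \psi(b) - \nabla \psi(a), u - a\rangle$, which is exactly \eqref{eq_main_lemma} after rearrangement.

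The main obstacle is the justification of the subdifferential sum rule and, in particular, verifying that the minimizer $a$ lies in $\operatorname{dom}(\partial \psi)$ so that $\nabla \psi(a)$ genuinely exists. This is precisely where the three hypotheses are spent: the inclusion $\operatorname{dom}(\partial \varphi) \subseteq \operatorname{dom}(\partial \psi)$ transfers subdifferentiability from $\varphi$ to $\psi$ at $a$, the strong convexity of $\psi + \mathbb{I}_{\operatorname{dom}(\varphi)}$ provides existence and uniqueness of $a$ together with the compatibility of the effective domains needed for the calculus rule, and the differentiability of $\psi$ over $\operatorname{dom}(\partial \psi)$ makes $\nabla \psi(a)$ single-valued. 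I would therefore concentrate the care on this calculus-of-subdifferentials step, since the remaining manipulations are immediate.
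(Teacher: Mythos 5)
First, a point of reference: the paper does not prove this lemma at all --- it is quoted from Beck's book \cite{beck2017first} (the non-Euclidean second prox theorem there), so your proposal can only be compared with the standard argument, which is indeed the one you outline: Fermat's rule for the strongly convex problem defining $a$, a splitting of the subdifferential, and the subgradient inequality for $\varphi$. Your first step (existence and uniqueness of $a$, since the objective differs from $\varphi+\psi$ by an affine term and $\psi+\mathbb{I}_{\operatorname{dom}(\varphi)}$ is $\sigma$-strongly convex) and your last step (reading \eqref{eq_main_lemma} off from $\nabla \psi(b)-\nabla \psi(a)\in\partial\varphi(a)$) are both correct.

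The gap is in the middle step, and your justification of it is circular. You want the hypothesis $\operatorname{dom}(\partial \varphi)\subseteq\operatorname{dom}(\partial \psi)$ to ``transfer subdifferentiability from $\varphi$ to $\psi$ at $a$,'' but that presupposes $a\in\operatorname{dom}(\partial\varphi)$, and the only information Fermat's rule gives is $0\in\partial h(a)$ for the sum $h=\varphi+\psi-\langle\nabla\psi(b),\cdot\rangle$. Extracting $\partial\varphi(a)\neq\emptyset$ from $\partial h(a)\neq\emptyset$ is precisely the exact sum rule you are trying to justify: the unconditional inclusion $\partial\varphi(a)+\partial\psi(a)\subseteq\partial h(a)$ goes the wrong way, and in general a minimizer of a sum of convex functions need not be a point of subdifferentiability of either summand (take $\varphi(x)=-\sqrt{x}$ on $[0,\infty)$ and $\psi(x)=-\sqrt{-x}$ on $(-\infty,0]$: the sum is $\mathbb{I}_{\{0\}}$, minimized at $0$, where both subdifferentials are empty). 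The correct order of logic is the reverse: the hypotheses yield the qualification condition for the sum rule \emph{before} anything is known about $a$. Since $\varphi$ is proper, $\emptyset\neq\operatorname{ri}(\operatorname{dom}\varphi)\subseteq\operatorname{dom}(\partial\varphi)\subseteq\operatorname{dom}(\partial\psi)\subseteq\operatorname{int}(\operatorname{dom}\psi)$, the last inclusion because differentiability of $\psi$ at a point (in the sense used here) forces that point to be interior to $\operatorname{dom}\psi$; hence $\operatorname{ri}(\operatorname{dom}\varphi)\cap\operatorname{ri}(\operatorname{dom}\psi)\neq\emptyset$ and the exact sum rule $\partial h(a)=\partial\varphi(a)+\partial\psi(a)-\nabla\psi(b)$ applies. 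Only now does $0\in\partial h(a)$ deliver, as a \emph{conclusion} rather than an input, both $a\in\operatorname{dom}(\partial\varphi)\subseteq\operatorname{dom}(\partial\psi)$ (so that $\partial\psi(a)=\{\nabla\psi(a)\}$) and $\nabla\psi(b)-\nabla\psi(a)\in\partial\varphi(a)$. An alternative that avoids subdifferential calculus is to compare $h(a)$ with $h\bigl(a+\lambda(u-a)\bigr)$, divide by $\lambda$ and let $\lambda\downarrow 0$, replacing the sum rule by the identity $\psi'(a;u-a)=\langle\nabla\psi(a),u-a\rangle$; but that identity again requires $a\in\operatorname{dom}(\partial\psi)$ to be established first, so it does not rescue the order of argument you propose.
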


\bigskip 

\noindent
\textbf{Fenchel-Young inequality.} For any $a \in \mathbf{E}, b \in \mathbf{E}^*$, it holds the following inequality
\begin{equation}\label{Fenchel_Young_ineq}
    \left|  \langle a,b \rangle\right| \leq \frac{\|a\|^2}{2 \lambda} + \frac{\lambda\|b\|_*^2}{2} \quad \forall \lambda > 0 .
\end{equation}
}



{\color{black}
\section{Lipschitz-free mirror descent method for constrained problem}
}

In this section, we consider the convex optimization problem
\begin{equation}\label{convex_problem}
    \min_{x\in Q} f(x)
\end{equation}
where $f : Q \longrightarrow \mathbb{R}$ is a convex continuous function.

The mirror descent method can be represented by the steps in Algorithm \ref{alg_mirror_descent} below, which is excerpted from \cite{alkousa2024mirror}.

\begin{algorithm}[!ht]
\caption{Mirror Descent Method.}\label{alg_mirror_descent}
\begin{algorithmic}[1]
\REQUIRE Step sizes $\{\gamma_k\}_{k \geq 1}$,  initial point $x^1  \in Q$, number of iterations $N$.
\FOR{$k= 1, 2, \ldots, N$}
\STATE Calculate $\nabla f(x^k) \in \partial f(x^k)$,
\STATE $x^{k+1} = \arg\min_{x \in Q} \left\{ \langle x, \nabla f(x^k)  \rangle + \frac{1}{\gamma_k} V_{\psi}(x,x^k) \right\} $.
\ENDFOR
\end{algorithmic}
\end{algorithm}

\subsection{Incorrectness in the analysis for adaptive step size in \cite{alkousa2024mirror}}

{\color{black}In the main convergence result in \cite{alkousa2024mirror}, it is necessary to require that $\{\gamma_k\}_{k \geq 1}$ is a positive non-increasing sequence of step sizes. In \cite[Corollary 3.3]{alkousa2024mirror} (also in \cite[Corollary 4.3]{alkousa2024mirror} for the composite problems), Alkousa et al. analyzed the optimal convergence rate of the mirror descent method with the adaptive time-varying step size rule (which we will call the Nesterov step size)
\begin{equation}\label{nesterov_step_size}
    \gamma_k = \frac{\sqrt{2 \sigma}}{\|\nabla f(x^k)\|_* \sqrt{k}}, \quad  k = 1, 2, \ldots, N.
\end{equation} }



But the Nesterov step sizes \eqref{nesterov_step_size} are not necessarily non-increasing, see Example \ref{counter_nesterov} as a counterexample in one-dimensional Euclidean space. Fortunately, this conflict can be avoided by making a modification to the step size setting. One of the goals of this note is to provide a corrective proof of the optimal convergence rate of the mirror descent method under the adaptive time-varying step size rules.

\begin{example}\label{counter_nesterov}
Consider the convex optimization problem \eqref{convex_problem}in one-dimensional Euclidean space. Let the function \(f(x)=\frac{1}{2}x^{2}\) and the distance generating function $\psi(x)=\frac{1}{2}x^{2}$. In this case, the mirror descent method is equivalent to the projected subgradient method.
Set the initial point as \(x^1 = 10\), the constraint set \(Q\) as \(\vert x\vert\leq10\). 
Select the Nesterov step sizes defined in \eqref{nesterov_step_size}.
The experimental results in Table \ref{tab:vertical_table_nesterov} demonstrate that the Nesterov step sizes \(\gamma_k\) are not necessarily non-increasing. 

\begin{table}[h]
    \centering
    \renewcommand{\arraystretch}{1.3}  
    \vspace{-0.5em}  
    \caption{Nesterov step sizes in Example \ref{counter_nesterov}.}
    \label{tab:vertical_table_nesterov}
    \begin{tabular}{ccc}  
        \toprule
        $k$ & $x^k$ & $\gamma_k$ \\  
        \midrule
        1 & 10 & 0.141421356237310 \\  
        2 & 8.58578643762690 & 0.116471566962991 \\  
        3 & 7.58578643762690 & 0.107635060338339 \\  
        4 & 6.76928985669918 & 0.104458044515078 \\  
        5 & 6.06218307551263 & 0.104328015857587 \\  
        $\cdots$ & $\cdots$ & $\cdots$ \\  
        13 & 2.06458695099841 & 0.189980988733214 \\  
        14 & 1.67235468072204 & 0.226007363967817 \\  
        $\cdots$ & $\cdots$ & $\cdots$ \\  
        24 & 0.209552285731976 & 1.37758046201432 \\  
        25 & -0.0791228488628367 & 3.57472862187939 \\  
        $\cdots$ & $\cdots$ & $\cdots$ \\  
        48 & 0.166305589462573 & 1.22740399701280 \\  
        49 & -0.0378185557693590 & 5.34210005645243 \\  
        $\cdots$ & $\cdots$ & $\cdots$ \\  
        60 & 0.155379438403268 & 1.17502153252226 \\  
        61 & -0.0271947474317873 & 6.65832593368331 \\  
        $\cdots$ & $\cdots$ & $\cdots$ \\  
        80 & 0.143015997988010 & 1.10556780523025 \\  
        81 & -0.0150978850204088 & 10.4077385707513 \\  
        \bottomrule
    \end{tabular}
    \vspace{-0.5em}  
\end{table}
\end{example}

Let $x^*$ be an optimal solution of \eqref{convex_problem}. Since $x^*$ is unknown, it is hard to precisely determine how $x^1$ should be chosen to satisfy the assumption \cite{alkousa2024mirror}
\begin{equation*}
    \max_{x \in Q} V_{\psi} (x^*, x) \leq V_{\psi} (x^*, x^1) < \infty.
\end{equation*}
Therefore, we restate the assumption as follows.   
\begin{assumption}\label{assump1}
There exists $R > 0$ such that 
\begin{equation}
    V_\psi(x^*,x)\le R<\infty \quad \forall x  \in Q.
\end{equation}
\end{assumption}

\subsection{Lipschitz-free mirror descent method}
Another important assumption in \cite[Theorem 3.1]{alkousa2024mirror}  is that the convex function $f$ should be $M_f$-Lipschitz, i.e., satisfy the following assumption.
\begin{assumption}\label{ass}
There exists an $M_f>0$ such that  for any subgradient $\nabla f(x)\in\partial f(x)\neq\emptyset$ and $x\in Q$, it holds that $\Vert \nabla f(x)\Vert_*\le M_f$.
\end{assumption}

Although one of the advantages of the adaptive step size rules is that there is no need to know the Lipschitz coefficient $M_f$ of the objective function in advance, Assumption \ref{assump1} is still necessary for the proof of the convergence result (see in \cite{nesterov2004lectures,nesterov2018lectures,bubeck2015convex}). However, many convex functions fail to satisfy Assumption \ref{ass} on a compact convex set.


In this section, we aim to offer a corrective proof for the optimal convergence result of the mirror descent method under the adaptive step size rules presented in \cite[Corollary 3.3]{alkousa2024mirror}. Moreover, we extend the optimal convergence rate of the mirror gradient descent method to the case where the Lipschitz condition is not assumed. Building upon the concept of weak ergodic convergence, we now introduce the following theorem.

\begin{theorem}\label{Lips_mirror_weak_ergodic}
Let $f$ be a convex continuous function, then for problem \eqref{convex_problem}, by Algorithm \ref{alg_mirror_descent}, for any fixed ${a}\in[0,1]$ and $m\ge-1$, with positive non-increasing step sizes
\begin{equation}\label{step_size}
    \gamma_k=\frac{\sqrt{2\sigma R}}{G_kk^{\frac{{a}}{2}}} ,\quad k=1,\ldots,N ,
\end{equation}
where
\begin{equation}\label{Gk}
    G_k=\max \left\{G_{k-1},\Vert\nabla f(x^k)\Vert_*k^{\frac{1-{a}}{2}}\right\} \quad(G_0 :=-\infty),
\end{equation}
and the weak ergodic convergence weight is defined as
\begin{equation}\label{weak_coeff}
    \omega_k^{(m)}=\begin{cases}
    1/\gamma_k^m, & \text{if } -1\le m\le 0, \\
     k^{m/2}, & \text{if } m>0,
    \end{cases}
\end{equation}
it satisfies the following inequality
\begin{equation}\label{result_weak_ergodic}
    f(\widehat{x}) -f(x^*)\le \sqrt{\frac{R}{2\sigma}} \cdot \frac{N^{\frac{m+1}{2}} + \sum_{k=1}^Nk^{\frac{m-1}{2}}}{\sum_{k=1}^N k^{\frac{m}{2}}} \cdot \max_{k=1, \ldots,N} \Vert\nabla f(x^k)\Vert_*,
\end{equation}
where 
{\color{black}
$$
    \widehat{x} = \frac{1}{\sum_{k=1}^N\omega_k^{(m)}}\sum_{k=1}^N\omega_k^{(m)}x^k.
$$}
\end{theorem}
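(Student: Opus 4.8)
The plan is to first establish a single per-iteration descent inequality and then convert it into the weighted ergodic bound \eqref{result_weak_ergodic} via a summation-by-parts argument. For the first step, I apply the non-Euclidean second prox theorem (Lemma \ref{prox_non}) to the update in Algorithm \ref{alg_mirror_descent}, taking $\varphi(x)=\gamma_k\langle x,\nabla f(x^k)\rangle+\mathbb{I}_Q(x)$, $b=x^k$ and $a=x^{k+1}$, which gives $\langle\nabla\psi(x^k)-\nabla\psi(x^{k+1}),x^*-x^{k+1}\rangle\le\gamma_k\langle x^*-x^{k+1},\nabla f(x^k)\rangle$. Rewriting the left-hand side through the three-point identity \eqref{eq_three_points} with $(a,b,c)=(x^{k+1},x^k,x^*)$, invoking convexity of $f$ to split $\langle\nabla f(x^k),x^k-x^*\rangle$, and bounding the residual inner product $\gamma_k\langle\nabla f(x^k),x^k-x^{k+1}\rangle$ by Fenchel--Young \eqref{Fenchel_Young_ineq} with $\lambda=1/\sigma$, the term $\tfrac{\sigma}{2}\|x^k-x^{k+1}\|^2$ it produces is absorbed by $-V_\psi(x^{k+1},x^k)\le-\tfrac{\sigma}{2}\|x^k-x^{k+1}\|^2$ from \eqref{eq_breg}. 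This yields $\gamma_k\bigl(f(x^k)-f(x^*)\bigr)\le V_\psi(x^*,x^k)-V_\psi(x^*,x^{k+1})+\tfrac{\gamma_k^2\|\nabla f(x^k)\|_*^2}{2\sigma}$.

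Next I exploit the step-size design \eqref{step_size}--\eqref{Gk}. Since $G_k\ge\|\nabla f(x^k)\|_*k^{(1-a)/2}$, substituting $\gamma_k$ collapses the noise term to $\tfrac{\gamma_k^2\|\nabla f(x^k)\|_*^2}{2\sigma}=\tfrac{R\|\nabla f(x^k)\|_*^2}{G_k^2k^a}\le\tfrac{R}{k}$, so that $\gamma_k\delta_k\le D_k-D_{k+1}+\tfrac{R}{k}$ with $\delta_k:=f(x^k)-f(x^*)$ and $D_k:=V_\psi(x^*,x^k)$. Dividing by $\gamma_k$, multiplying by $\omega_k^{(m)}$ and summing over $k$, the crux is to control $\sum_{k=1}^N c_k(D_k-D_{k+1})$ with $c_k:=\omega_k^{(m)}/\gamma_k$ by summation by parts. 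The key structural fact is that $c_k$ is non-decreasing in both regimes of \eqref{weak_coeff}: $\gamma_k$ is itself non-increasing because $G_k$ is a running maximum and $k^{a/2}$ is non-decreasing; for $-1\le m\le0$ one has $c_k=\gamma_k^{-(m+1)}$ with exponent $-(m+1)\le0$, and for $m>0$ one has $c_k=G_kk^{(m+a)/2}/\sqrt{2\sigma R}$, both non-decreasing. Using $D_k\le R$ (Assumption \ref{assump1}) and $D_{N+1}\ge0$, the telescoped sum is bounded by $Rc_N$, giving $\sum_k\omega_k^{(m)}\delta_k\le R\bigl(c_N+\sum_{k=1}^N c_k/k\bigr)$; convexity of $f$ then turns this into $f(\widehat{x})-f(x^*)\le R\bigl(c_N+\sum_{k=1}^N c_k/k\bigr)/\sum_{k=1}^N\omega_k^{(m)}$.

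It remains to read off the explicit rate, and here the two cases of \eqref{weak_coeff} are handled in parallel using $M:=\max_{1\le k\le N}\|\nabla f(x^k)\|_*$. From $G_k\le Mk^{(1-a)/2}$ (valid since $a\le1$) one obtains the control $1/\gamma_k\le M\sqrt{k}/\sqrt{2\sigma R}$. For $m>0$ the denominator $\sum_k\omega_k^{(m)}=\sum_k k^{m/2}$ is exact, and $c_N\le\tfrac{M}{\sqrt{2\sigma R}}N^{(m+1)/2}$ together with $c_k/k\le\tfrac{M}{\sqrt{2\sigma R}}k^{(m-1)/2}$ plug in directly. For $-1\le m\le0$ I instead lower-bound $\sum_k\gamma_k^{-m}\ge(M/\sqrt{2\sigma R})^{m}\sum_k k^{m/2}$ and upper-bound $c_N$ and $c_k/k$ by $(M/\sqrt{2\sigma R})^{m+1}$ times $N^{(m+1)/2}$ and $k^{(m-1)/2}$ respectively; the powers of $M/\sqrt{2\sigma R}$ then combine to the single factor $R\cdot\tfrac{M}{\sqrt{2\sigma R}}=\sqrt{R/(2\sigma)}\,M$. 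Either way one arrives at \eqref{result_weak_ergodic}.

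I expect the main obstacle to lie in this last, case-dependent bookkeeping rather than in the descent inequality: verifying the monotonicity of $c_k$ from the running-maximum structure of $G_k$, and, in the regime $-1\le m\le0$, carrying out the lower bound on $\sum_k\gamma_k^{-m}$ so that both the $a$-dependence and the extra powers of $M/\sqrt{2\sigma R}$ cancel cleanly and leave exactly the stated constant $\sqrt{R/(2\sigma)}$.
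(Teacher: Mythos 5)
Your proposal is correct and follows essentially the same route as the paper's proof: the same per-iteration inequality obtained from Lemma \ref{prox_non}, the three-point identity \eqref{eq_three_points}, and Fenchel--Young with \eqref{eq_breg}, then the same weighted summation-by-parts exploiting monotonicity of $\omega_k^{(m)}/\gamma_k$ together with $V_\psi(x^*,x^k)\le R$, Jensen's inequality, and the same two-case bookkeeping in $m$ using $\|\nabla f(x^k)\|_*k^{(1-a)/2}\le G_k\le M k^{(1-a)/2}$. The only (cosmetic) deviations are that you bound the noise term early by $R/k$ instead of carrying $\|\nabla f(x^k)\|_*$ through to the end, and you include $\mathbb{I}_Q$ explicitly in $\varphi$, which if anything makes the prox-lemma step cleaner than the paper's version.
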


\begin{proof}
Because $f$ is a convex function, we can obtain
\begin{eqnarray}
    \gamma_k(f(x^k)-f(x^*))&\le&\gamma_k\langle\nabla f(x^k),x^k-x^*\rangle\nonumber\\
    &=&\gamma_k\langle\nabla f(x^k),x^{k+1}-x^*\rangle+\gamma_k\langle\nabla f(x^k),x^k-x^{k+1}\rangle. \label{convex_f}
\end{eqnarray}

By Lemma \ref{prox_non}, and choosing $\varphi(x)=\gamma_k\langle \nabla f(x^k),x\rangle$, $a=x^k$, $b=x^{k+1}$, $u=x^*$, we can rewrite \eqref{eq_main_lemma} as
\begin{equation}\label{imply_prox}
    \left\langle \nabla \psi(x^k) - \nabla \psi(x^{k+1}), x^* - x^k \right\rangle \leq \gamma_k\langle \nabla f(x^k),x^*-x^{k+1}\rangle.
\end{equation}

From Lemma \ref{three_point}, we get
\begin{equation}\label{imply_three}
    \left\langle \nabla \psi(x^k) - \nabla \psi(x^{k+1}), x^* - x^k \right\rangle =V_{\psi}(x^*,x^{k+1})+V_{\psi}(x^{k+1}, x^k)-V_{\psi}(x^*, x^k). 
\end{equation}

Combining \eqref{imply_prox} and \eqref{imply_three}, and since $\psi$ is $\sigma$-strongly convex, we have
\begin{eqnarray}\label{three-sima}
    \gamma_k\langle \nabla f(x^k),x^{k+1}-x^*\rangle&\le& V_{\psi}(x^*, x^k)-V_{\psi}(x^*,x^{k+1})-V_{\psi}(x^{k+1}, x^k)\nonumber\\ 
    &\le&V_{\psi}(x^*, x^k)-V_{\psi}(x^*,x^{k+1})-\frac{\sigma}{2}\Vert x^{k+1}-x^k\|^2. \label{gamma-inq_1}
\end{eqnarray}
Based on the Fenchel-Young inequality, we  get
\begin{equation}\label{gamma_inq_2}
    \gamma_k\langle\nabla f(x^k),x^k-x^{k+1}\rangle\le\frac{\gamma_k^2}{2\sigma}\|\nabla f(x^k)\|^2_*+\frac{\sigma}{2}\|x^k-x^{k+1}\|^2.
\end{equation}
Combining \eqref{convex_f}, \eqref{gamma-inq_1} and \eqref{gamma_inq_2}, we obtain
\begin{equation}\label{eq_start}
    f(x^k) - f(x^*) \leq \frac{1}{\gamma_k} \left( V_{\psi} (x^*, x^{k}) - V_{\psi} (x^*, x^{k+1}) \right) + \frac{\gamma_k}{2 \sigma} \|\nabla f(x^k)\|_*^2.
\end{equation}

By the definition of weak ergodic convergence weight in \eqref{weak_coeff}, because $\{\gamma_k\}_{k=1}^N$ is a positive non-increasing step sizes, we have
\begin{equation}\label{+omega}
    \frac{\omega_k^{(m)}}{\gamma_k}-\frac{\omega_{k-1}^{(m)}}{\gamma_{k-1}}=\begin{cases}
    \frac{1}{\gamma_k^{m+1}}-\frac{1}{\gamma_{k-1}^{m+1}}\ge0, & \text{if } -1\le m\le0,k\ge2, \\
    \frac{k^{m/2}}{\gamma_k}-\frac{(k-1)^{m/2}}{\gamma_{k-1}}\ge0, & \text{if } m>0,k\ge2.
    \end{cases}
\end{equation}
We can rewrite $G_k$ in the following equivalent form
\begin{equation}\label{rewrite_GK}
    G_k=\max_{j=1,\ldots,k}\left\{\Vert\nabla f(x^j)\Vert_*\cdot j^{\frac{1-{a}}{2}}\right\}. 
\end{equation}
Then by Jensen's inequality, \eqref{eq_start}, the boundedness of $V_\psi(x^*,x^k)$, and \eqref{+omega}, we obtain
\begin{eqnarray}
    & &\left(\sum_{k=1}^N\omega_k^{(m)}\right)\left[f\left(\frac{\sum_{k=1}^N\omega_k^{(m)}x^k}{\sum_{k=1}^N\omega_k^{(m)}}\right)-f(x^*)\right] \nonumber\\
    &\le& \sum_{k=1}^N \omega_k^{(m)} \left(f(x^k)-f(x^*)\right)\nonumber\\ 
    &\le&\sum_{k=1}^N\frac{\omega_k^{(m)}}{\gamma_k} \left(V_\psi(x^*,x^k)-V_\psi(x^*,x^{k+1})\right)+\sum_{k=1}^N\frac{\omega_k^{(m)}\gamma_k}{2\sigma}\|\nabla f(x^k)\|_*^2 \nonumber\\
    &=&\frac{\omega_1^{(m)}}{\gamma_1}V_\psi(x^*,x^{1})+\sum_{k=2}^N \left(\frac{\omega_k^{(m)}}{\gamma_k}-\frac{\omega_k^{(m)}}{\gamma_{k-1}}\right) V_\psi(x^*,x^{k})\nonumber\\
    & &-\frac{\omega_N^{(m)}}{\gamma_N}V_\psi(x^*,x^{N})+\sum_{k=1}^N\frac{\omega_k^{(m)}\gamma_k}{2\sigma}\|\nabla f(x^k)\|_*^2 \nonumber\\
    &\le& R \left(\frac{\omega_1^{(m)}}{\gamma_1}+\sum_{k=2}^N \left(\frac{\omega_k^{(m)}}{\gamma_k}-\frac{\omega_k^{(m)}}{\gamma_{k-1}}\right)\right)+\sum_{k=1}^N\frac{\omega_k^{(m)}\gamma_k}{2\sigma}\|\nabla f(x^k)\|_*^2 \nonumber\\
    &\le&R\frac{\omega_N^{(m)}}{\gamma_N}+\sum_{k=1}^N\frac{\omega_k^{(m)}\gamma_k}{2\sigma}\|\nabla f(x^k)\|_*^2. \label{golden}
\end{eqnarray}
{\color{black}Let $\widehat{x} := \frac{\sum_{k=1}^N\omega_k^{(m)}x^k}{\sum_{k=1}^N\omega_k^{(m)}}$.} Because the weak ergodic convergence weight is non-negative, for $-1\le m\le0$, by the definition of $\gamma_k$ in \eqref{step_size} and \eqref{Gk}, we have, 
\begin{eqnarray}
    &&f(\widehat{x})-f(x^*)\nonumber\\
    &\le&\frac{R\frac{\omega_N^{(m)}}{\gamma_N}+\sum_{k=1}^N\frac{\omega_k^{(m)}\gamma_k}{2\sigma}\|\nabla f(x^k)\|_*^2}{\sum_{k=1}^N\omega_k^{(m)}} \nonumber\\
    &\le&\frac{\frac{R}{\gamma_N^{m+1}}+\sum_{k=1}^N\frac{\gamma_k^{1-m}}{2\sigma}\|\nabla f(x^k)\|_*^2}{\sum_{k=1}^N\gamma_k^{-m}} \nonumber\\
    &\le&\sqrt{\frac{R}{2\sigma}}\frac{\left(G_NN^{\frac{{a}}{2}}\right)^{m+1}+\sum_{k=1}^N \left(G_kk^{\frac{{a}}{2}}\right)^{m-1}\|\nabla f(x^k)\|_*^2}{\sum_{k=1}^N \left(G_kk^{\frac{{a}}{2}}\right)^m} \nonumber\\
    &\le& \sqrt{\frac{R}{2\sigma}} \frac{\left(\max\limits_{k=1,\ldots,N}\Vert\nabla f(x^k)\Vert_* \right)^{1+m}\cdot N^{\frac{m+1}{2}}+\sum_{k=1}^N\|\nabla f(x^k)\|_*^{1+m} \cdot k^{\frac{m-1}{2}}}{\left(\max\limits_{k=1,\ldots,N}\Vert\nabla f(x^k)\Vert_* \right)^m\sum_{k=1}^Nk^{\frac{m}{2}}} \nonumber\\
    &\le&\sqrt{\frac{R}{2\sigma}}\cdot\frac{N^{\frac{m+1}{2}}+\sum_{k=1}^Nk^{\frac{m-1}{2}}}{\sum_{k=1}^Nk^{\frac{m}{2}}}\cdot\max_{k=1,\ldots,N}\Vert\nabla f(x^k)\Vert_*\label{minor_m}.
\end{eqnarray}
When $m>0$, by the definition of $\omega_k^{(m)}$ and $\gamma_k$,  we can get
\begin{eqnarray*}
    && f(\widehat{x})-f(x^*)\\
    &\le&\frac{\frac{R\cdot N^{\frac{m}{2}}}{\gamma_N}+\sum_{k=1}^N\frac{k^{\frac{m}{2}}\gamma_k}{2\sigma}\|\nabla f(x^k)\|_*^2}{\sum_{k=1}^Nk^{\frac{m}{2}}}\\
    &\le&\sqrt{\frac{R}{2\sigma}}\frac{G_NN^{\frac{m+{a}}{2}}+\sum_{k=1}^N\|\nabla f(x^k)\|_*^2G_k^{-1}k^{\frac{m-{a}}{2}}}{\sum_{k=1}^Nk^{\frac{m}{2}}} \\
    &\le&\sqrt{\frac{R}{2\sigma}}\frac{\max\limits_{k=1,\ldots,N}\Vert\nabla f(x^k)\Vert_*\cdot N^{\frac{m+1}{2}}+\sum_{k=1}^N\|\nabla f(x^k)\|_*\cdot k^{\frac{m-1}{2}}}{\sum_{k=1}^Nk^{\frac{m}{2}}} \\
    &\le&\sqrt{\frac{R}{2\sigma}}\cdot\frac{N^{\frac{m+1}{2}}+\sum_{k=1}^Nk^{\frac{m-1}{2}}}{\sum_{k=1}^Nk^{\frac{m}{2}}}\cdot\max_{k=1,\ldots,N}\Vert\nabla f(x^k)\Vert_* .
\end{eqnarray*}
This completes the proof.
\qed
\end{proof}

By setting $m=0$, we can obtain the optimal convergence rate result of the Lipschitz-free mirror descent method under the adaptive time-varying step size rules:
\begin{corollary}\label{linear_ergodic_result}
Let $f$ be a convex continuous function, then for problem \eqref{convex_problem}, by Algorithm \ref{alg_mirror_descent}, for any fixed ${a}\in[0,1]$ and $m=0$, with the step sizes \eqref{step_size}, it satisfies the following inequality
\begin{equation}\label{linear_eq}
    f\left(\frac{1}{N}\sum_{k=1}^Nx^k \right)-f(x^*)\le3\sqrt{\frac{R}{2\sigma}}\cdot\frac{1}{\sqrt{N}}\cdot\max_{k=1,\ldots,N}\Vert\nabla f(x^k)\Vert_*.
\end{equation}
\end{corollary}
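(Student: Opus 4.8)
The plan is to derive Corollary~\ref{linear_ergodic_result} as a direct specialization of Theorem~\ref{Lips_mirror_weak_ergodic} to the single choice $m=0$, followed by one elementary scalar estimate. First I would verify that $m=0$ genuinely collapses the weighting scheme to the uniform average. Since $m=0$ lies in the range $-1\le m\le 0$, the definition \eqref{weak_coeff} gives $\omega_k^{(0)}=1/\gamma_k^{0}=1$ for every $k$, so the weak-ergodic point reduces to $\widehat{x}=\frac{1}{\sum_{k=1}^N 1}\sum_{k=1}^N x^k=\frac{1}{N}\sum_{k=1}^N x^k$, which is exactly the argument appearing on the left-hand side of \eqref{linear_eq}. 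This consistency check is what licenses quoting Theorem~\ref{Lips_mirror_weak_ergodic} verbatim with no further work on the iterate side.

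Next I would substitute $m=0$ into the right-hand side of \eqref{result_weak_ergodic}. The denominator becomes $\sum_{k=1}^N k^{0}=N$, and the numerator becomes $N^{1/2}+\sum_{k=1}^N k^{-1/2}$, so the theorem immediately yields
\begin{equation*}
f\!\left(\tfrac{1}{N}\sum_{k=1}^N x^k\right)-f(x^*)\le \sqrt{\frac{R}{2\sigma}}\cdot\frac{\sqrt{N}+\sum_{k=1}^N k^{-1/2}}{N}\cdot\max_{k=1,\ldots,N}\|\nabla f(x^k)\|_*.
\end{equation*}
The only remaining task is to control the residual sum $\sum_{k=1}^N k^{-1/2}$ and read off the clean rate.

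The one genuine (though entirely routine) ingredient is the bound $\sum_{k=1}^N k^{-1/2}\le 2\sqrt{N}$, which I would obtain by an integral comparison: since $x\mapsto x^{-1/2}$ is decreasing, $k^{-1/2}\le \int_{k-1}^{k} x^{-1/2}\,dx$, and summing over $k=1,\ldots,N$ gives $\sum_{k=1}^N k^{-1/2}\le \int_0^N x^{-1/2}\,dx=2\sqrt{N}$. Combined with $N^{1/2}=\sqrt{N}$, this bounds the numerator by $3\sqrt{N}$, so the whole fraction is at most $3\sqrt{N}/N=3/\sqrt{N}$. Inserting this back into the displayed inequality produces precisely \eqref{linear_eq}, completing the argument.

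I do not anticipate any real obstacle, as the corollary is a pure specialization of the already-established theorem; the single nontrivial step is the standard harmonic-type estimate $\sum_{k=1}^N k^{-1/2}\le 2\sqrt{N}$. The only points deserving care are confirming that $m=0$ activates the $-1\le m\le 0$ branch of \eqref{weak_coeff} (so that the weights are uniform and $\widehat{x}$ is the plain average), and using the sharp constant $2$ in the integral comparison rather than a looser bound, since it is exactly this constant that delivers the advertised constant $3$ in \eqref{linear_eq}.
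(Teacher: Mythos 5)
Your proposal is correct and matches the paper's intended derivation: the paper states Corollary~\ref{linear_ergodic_result} as an immediate specialization of Theorem~\ref{Lips_mirror_weak_ergodic} to $m=0$, where the weights $\omega_k^{(0)}=1$ make $\widehat{x}$ the plain average, and the constant $3$ comes from exactly the estimate you give, $\sqrt{N}+\sum_{k=1}^N k^{-1/2}\le \sqrt{N}+2\sqrt{N}=3\sqrt{N}$. Your integral-comparison argument for $\sum_{k=1}^N k^{-1/2}\le 2\sqrt{N}$ is the standard and correct way to supply the one step the paper leaves implicit.
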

\begin{remark}
If we choose the distance generating function $\psi=\frac{1}{2}\|x\|_2^2$, then the results in Theorem \ref{Lips_mirror_weak_ergodic} and Corollary \ref{linear_ergodic_result} reduce to the result in \cite{Lipschitz-free}. If further given Assumption \ref{ass}, Corollary \ref{linear_ergodic_result} would be the same as \cite[Corollary 3.2]{Zhu}.
\end{remark}

{\color{black}
\section{Lipschitz-free mirror descent method for constrained composite
problem}
}

In this section, we consider the composite convex problem
\begin{equation}\label{composite}
    \min_{x\in Q} F(x)=f(x)+h(x),
\end{equation}
where $f$ is convex function, and \(h\) is a non-negative, continuous, and convex function. Compared with Algorithm \ref{alg_mirror_descent}, we just need to change the iterative strategy to the following 

\begin{algorithm}[!ht]
\caption{Composite Mirror Descent Method.}\label{alg_mirror_descent_com}
\begin{algorithmic}[2]
\REQUIRE Step sizes $\{\gamma_k\}_{k \geq 1}$,  initial point $x^1  \in Q$, number of iterations $N$.
\FOR{$k= 1, 2, \ldots, N$}
\STATE Calculate $\nabla f(x^k) \in \partial f(x^k)$,
\STATE $    x^{k+1} = \arg\min_{x \in Q} \left\{ \langle \nabla f(x^k),x  \rangle  +\frac{1}{\gamma_k}h(x)+ \frac{1}{\gamma_k} V_{\psi}(x,x^k)\right\}$.
\ENDFOR
\end{algorithmic}
\end{algorithm}

Now we introduce the following theorem, while Assumption \ref{ass} is still not necessary.
\begin{theorem}\label{comsite_ergodic}
Let $f$ be a convex continuous function and $h$ be a non-negative convex function, then for problem \eqref{composite}, by Algorithm \ref{alg_mirror_descent_com}, for any fixed ${a}\in[0,1]$ and $-1\le m\le0$, with positive non-increasing step sizes $\{\gamma_k\}_{k=1}^N$ defined in \eqref{step_size} and weak ergodic convergence weight $\left\{\omega_k^{(m)}\right\}_{k=1}^N$ defined in \eqref{weak_coeff}, it satisfies the following inequality
\begin{align*}
    F(\widehat{x})-F(x^*) & \le\sqrt{\frac{R}{2\sigma}} \cdot\frac{N^{\frac{m+1}{2}} +\sum_{k=1}^Nk^{\frac{m-1}{2}}}{\sum_{k=1}^Nk^{\frac{m}{2}}}\cdot\max_{k=1,\ldots,N}\Vert\nabla f(x^k)\Vert_*
    \\& \quad 
    +\left(\frac{\|\nabla f(x^1)\|_*}{\max\limits_{k=1,\ldots,N} \Vert\nabla f(x^k)\Vert_*}\right)^m \cdot \frac{h(x^1)}{\sum_{k=1}^Nk^{\frac{m}{2}}},
\end{align*}
where $\widehat{x} = \frac{1}{\sum_{k=1}^N\omega_k^{(m)}} \sum_{k=1}^N\omega_k^{(m)}x^k$.
\end{theorem}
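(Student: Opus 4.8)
The plan is to reproduce the per-iteration estimate behind Theorem \ref{Lips_mirror_weak_ergodic}, but now carrying the nonsmooth term $h$ through the non-Euclidean prox step of Algorithm \ref{alg_mirror_descent_com}. First I would apply Lemma \ref{prox_non} to the composite update; after scaling the objective by $\gamma_k>0$ this corresponds to $\varphi(x)=\gamma_k\langle\nabla f(x^k),x\rangle+h(x)$ with center $b=x^k$ and minimizer $a=x^{k+1}$, so taking $u=x^*$ gives $\langle\nabla\psi(x^k)-\nabla\psi(x^{k+1}),x^*-x^{k+1}\rangle\le\gamma_k\langle\nabla f(x^k),x^*-x^{k+1}\rangle+h(x^*)-h(x^{k+1})$. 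Combining this with the three-point identity (Lemma \ref{three_point}), the $\sigma$-strong convexity of $\psi$, the convexity splitting $\gamma_k(f(x^k)-f(x^*))\le\gamma_k\langle\nabla f(x^k),x^{k+1}-x^*\rangle+\gamma_k\langle\nabla f(x^k),x^k-x^{k+1}\rangle$, and the Fenchel--Young inequality \eqref{Fenchel_Young_ineq} exactly as in the derivation of \eqref{eq_start}, the $V_\psi(x^{k+1},x^k)$ and $\frac{\sigma}{2}\|x^k-x^{k+1}\|^2$ terms cancel and I obtain the composite per-step bound $\gamma_k(f(x^k)-f(x^*))+h(x^{k+1})-h(x^*)\le V_\psi(x^*,x^k)-V_\psi(x^*,x^{k+1})+\frac{\gamma_k^2}{2\sigma}\|\nabla f(x^k)\|_*^2$.

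Next I would divide by $\gamma_k$, multiply by $\omega_k^{(m)}$, and sum over $k=1,\dots,N$. The terms not involving $h$ are treated verbatim as in the passage leading to \eqref{golden}: Abel summation of $\sum_k\frac{\omega_k^{(m)}}{\gamma_k}\big(V_\psi(x^*,x^k)-V_\psi(x^*,x^{k+1})\big)$, together with the monotonicity \eqref{+omega} and the uniform bound $V_\psi(x^*,x^k)\le R$ of Assumption \ref{assump1}, collapses to $R\,\omega_N^{(m)}/\gamma_N$, while the quadratic terms aggregate to $\sum_k\frac{\omega_k^{(m)}\gamma_k}{2\sigma}\|\nabla f(x^k)\|_*^2$. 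Dividing by $\sum_k\omega_k^{(m)}$ and inserting the step sizes \eqref{step_size} and the two-sided estimate $\|\nabla f(x^k)\|_*k^{(1-a)/2}\le G_k\le(\max_{j}\|\nabla f(x^j)\|_*)k^{(1-a)/2}$ coming from \eqref{rewrite_GK} reproduces precisely the first summand $\sqrt{R/2\sigma}\cdot\frac{N^{(m+1)/2}+\sum_k k^{(m-1)/2}}{\sum_k k^{m/2}}\cdot\max_k\|\nabla f(x^k)\|_*$ of the claimed bound.

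The genuinely new work is to account for the proximal contribution $\sum_k\frac{\omega_k^{(m)}}{\gamma_k}\big(h(x^{k+1})-h(x^*)\big)$ appearing on the left, together with the $\sum_k\omega_k^{(m)}\big(h(x^k)-h(x^*)\big)$ produced by writing $F(x^k)=f(x^k)+h(x^k)$ and invoking Jensen's inequality for the convex $F$ at $\widehat{x}$. Here I would shift the index in $\sum_k\frac{\omega_k^{(m)}}{\gamma_k}h(x^{k+1})=\sum_{k\ge2}\frac{\omega_{k-1}^{(m)}}{\gamma_{k-1}}h(x^k)$ and perform an Abel summation against $\sum_k\omega_k^{(m)}h(x^k)$, so that each interior $h(x^k)$ carries the coefficient $\omega_k^{(m)}-\frac{\omega_{k-1}^{(m)}}{\gamma_{k-1}}$ while the boundary leaves $\omega_1^{(m)}h(x^1)$ and a non-positive $-\frac{\omega_N^{(m)}}{\gamma_N}h(x^{N+1})$. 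Using $h\ge0$ and the monotonicity of $\{\gamma_k\}$ to sign these cross terms, the whole proximal block is bounded by $\omega_1^{(m)}h(x^1)$; then, since $\gamma_1=\sqrt{2\sigma R}/\|\nabla f(x^1)\|_*$ and $G_1=\|\nabla f(x^1)\|_*$, dividing by $\sum_k\omega_k^{(m)}\ge\big(\max_j\|\nabla f(x^j)\|_*/\sqrt{2\sigma R}\big)^m\sum_k k^{m/2}$ yields exactly the residual $\big(\|\nabla f(x^1)\|_*/\max_k\|\nabla f(x^k)\|_*\big)^m\cdot h(x^1)/\sum_k k^{m/2}$.

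I expect the main obstacle to be precisely this last telescoping of the $h$-terms. The weight attached to $h(x^{k+1})$ by the composite prox step is $\omega_k^{(m)}/\gamma_k=\gamma_k^{-(m+1)}$, whereas the weight attached to $h(x^k)$ by Jensen is $\omega_k^{(m)}=\gamma_k^{-m}$, so the two do not match termwise; the interior coefficients $\gamma_k^{-m}-\gamma_{k-1}^{-(m+1)}$ and the residual multiples of $h(x^*)$ must be shown non-positive using the non-increasing nature of $\{\gamma_k\}$ together with the non-negativity of $h$. Ensuring that only the single boundary term $\omega_1^{(m)}h(x^1)$ survives is the crux of the argument, and it is exactly what dictates the restriction to $-1\le m\le0$ and the hypothesis that $h$ is non-negative.
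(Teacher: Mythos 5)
Your first and second paragraphs, and your final bound $\omega_1^{(m)}h(x^1)/\sum_k\omega_k^{(m)}\le\bigl(\|\nabla f(x^1)\|_*/\max_k\|\nabla f(x^k)\|_*\bigr)^m h(x^1)/\sum_k k^{m/2}$ via $G_1=\|\nabla f(x^1)\|_*$, match the paper's proof. But the step you yourself flag as the crux genuinely fails, and it cannot be repaired by monotonicity of $\{\gamma_k\}$ plus $h\ge0$. With $\omega_k^{(m)}=\gamma_k^{-m}$, $-m\in[0,1]$, your per-step bound gives $F(x^k)-F(x^*)\le\mathcal{R}_k+(\tfrac{1}{\gamma_k}-1)h(x^*)+h(x^k)-\tfrac{1}{\gamma_k}h(x^{k+1})$, where $\mathcal{R}_k$ collects the Bregman and gradient terms. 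After weighting and Abel summation you must therefore show that $h(x^*)\sum_k\omega_k^{(m)}(\tfrac{1}{\gamma_k}-1)+\sum_{k=2}^N\bigl(\omega_k^{(m)}-\tfrac{\omega_{k-1}^{(m)}}{\gamma_{k-1}}\bigr)h(x^k)\le 0$. Since $h(x^*)$ and $h(x^k)$ are arbitrary non-negative numbers, each coefficient must be non-positive. Take $m=0$ (the case of Corollary \ref{linear_ergodic_result_com}): the interior coefficients are $1-\tfrac{1}{\gamma_{k-1}}$, non-positive only if $\gamma_{k-1}\le1$, while the $h(x^*)$ coefficient is $\sum_k(\tfrac{1}{\gamma_k}-1)$, non-positive only if the $\gamma_k$ are (on balance) at least $1$. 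These two requirements are compatible essentially only when $\gamma_k\equiv1$, and the step sizes \eqref{step_size} sit on no particular side of $1$: for instance $\gamma_1=\sqrt{2\sigma R}/\|\nabla f(x^1)\|_*$ can be arbitrarily large or small. So the proposal as written does not close, and the residual multiple of $h(x^*)$ in particular cannot be discarded.

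The source of the trouble is your (perfectly literal) reading of Algorithm \ref{alg_mirror_descent_com}: taking $\varphi(x)=\gamma_k\langle\nabla f(x^k),x\rangle+h(x)$ in Lemma \ref{prox_non} makes the $h$-difference enter the per-step bound with weight $1$ while the $f$-difference enters with weight $\gamma_k$, and this mismatch is exactly what produces the uncontrollable cross terms. The paper's proof instead chooses $\varphi(x)=\gamma_k\langle\nabla f(x^k),x\rangle+\gamma_k h(x)$, i.e., it analyzes the update $x^{k+1}=\arg\min_{x\in Q}\bigl\{\langle\nabla f(x^k),x\rangle+h(x)+\tfrac{1}{\gamma_k}V_\psi(x,x^k)\bigr\}$, with $h$ on the same footing as the linearization (the algorithm display with $\tfrac{1}{\gamma_k}h(x)$ is inconsistent with this; your reading follows the display, the paper's proof does not). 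With matched weights the per-step bound becomes $f(x^k)+h(x^{k+1})-F(x^*)\le\tfrac{1}{\gamma_k}\bigl(V_\psi(x^*,x^k)-V_\psi(x^*,x^{k+1})\bigr)+\tfrac{\gamma_k}{2\sigma}\|\nabla f(x^k)\|_*^2$: the $h(x^*)$ term is absorbed into $F(x^*)$ with no residual, and the only telescoping needed is $\sum_k\omega_k^{(m)}\bigl(h(x^{k+1})-h(x^k)\bigr)\ge-\omega_1^{(m)}h(x^1)$, which follows solely from $\omega_k^{(m)}$ being non-increasing (this, not your cross-term signs, is where $-1\le m\le 0$ is used) and $h\ge0$, with no condition whatsoever on the size of $\gamma_k$. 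In short: your approach is the right skeleton, but the theorem as stated is provable only for the $\gamma_k$-scaled-$h$ prox step; for the update you analyzed, the key sign claims are false and the argument collapses.
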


\begin{proof}
By Lemma \ref{prox_non}, by choosing $\varphi(x)=\gamma_k\langle \nabla f(x^k),x\rangle+\gamma_kh(x)$, $a=x^k$, $b=x^{k+1}$, $u=x^*$, then \eqref{eq_main_lemma} can be rewritten as
\begin{equation}\label{imply_prox_com}
    \left\langle \nabla \psi(x^k) - \nabla \psi(x^{k+1}), x^* - x^k \right\rangle \leq \gamma_k\langle \nabla f(x^k),x^*-x^{k+1}\rangle+\gamma_kh(x^*)-\gamma_kh(x^{k+1}).
\end{equation}
Combining \eqref{imply_prox_com}, \eqref{gamma-inq_1} and Lemma \ref{three_point}, we can get
\begin{align}\label{h+three}
    \gamma_k\langle \nabla f(x^k),x^{k+1}-x^*\rangle & \le V_{\psi}(x^*, x^k)-V_{\psi}(x^*,x^{k+1})-\frac{\sigma}{2}\Vert x^{k+1}-x^k\|^2\nonumber
    \\& \quad +\gamma_kh(x^*)-\gamma_kh(x^{k+1}).
\end{align}
Note that \eqref{convex_f} and \eqref{gamma_inq_2} still hold, so we have
\begin{eqnarray}
    \gamma_k(f(x^k)+h(x^{k+1})-F(x^*))=\gamma_k(f(x^k)-f(x^*)+h(x^{k+1})-h(x^*))\nonumber\\
    \le V_{\psi}(x^*, x^k)-V_{\psi}(x^*,x^{k+1})+\frac{\gamma_k^2}{2 \sigma} \|\nabla f(x^k)\|_*^2.
\end{eqnarray}
Then we can obtain the following important inequality
\begin{equation}
    f(x^k)+h(x^{k+1})-F(x^*)\le\frac{1}{\gamma_k} \left( V_{\psi} (x^*, x^{k}) - V_{\psi} (x^*, x^{k+1}) \right) + \frac{\gamma_k}{2 \sigma} \|\nabla f(x^k)\|_*^2.
\end{equation}

Similar to the analysis in \eqref{golden}, since the $\gamma_k$ we selected is exactly the same, we get
\begin{equation}\label{ommga_com}
    \sum_{k=1}^N \omega_k^{(m)} \left(f(x^k)+h(x^{k+1})-F(x^*)\right) \le R\frac{\omega_N^{(m)}}{\gamma_N} + \sum_{k=1}^N \frac{\omega_k^{(m)} \gamma_k}{2\sigma} \|\nabla f(x^k)\|_*^2.
\end{equation}

Now, we shall handle the left-hand side of \eqref{ommga_com}. Because $\omega_k^{(m)}$ is non-increasing when $-1\le m\le 0$ and $h$ is non-negative, we have
\begin{eqnarray}
    && \quad \sum_{k=1}^N \omega_k^{(m)} \left(f(x^k)+h(x^{k+1})-F(x^*)\right) \nonumber\\
    && = \sum_{k=1}^N \omega_k^{(m)} \left(f(x^k)+h(x^k)-h(x^k)+h(x^{k+1})-F(x^*)\right) \nonumber\\
    && = \sum_{k=1}^N \omega_k^{(m)} \left( F(x^k) -F(x^*) \right)+ \sum_{k=1}^N \omega_k^{(m)}\left(h(x^{k+1}) - h(x^k) \right) \nonumber\\
    && = \sum_{k=1}^N \omega_k^{(m)} \left(F(x^k) - F(x^*) \right)-\omega_1^{(m)}h(x^1) + \sum_{k=2}^N \left(\omega_{k-1}^{(m)} -\omega_k^{(m)}\right)h(x^k) + \omega_N^{(m)} h(x^{N+1}) \nonumber\\
    && \geq \sum_{k=1}^N \omega_k^{(m)} \left(F(x^k)-F(x^*) \right)-\omega_1^{(m)}h(x^1).
\end{eqnarray}
Then by Jensen's Inequality, we have:
\begin{eqnarray*}
    & &   \left(\sum_{k=1}^N\omega_k^{(m)}\right)\left[F\left(\frac{\sum_{k=1}^N\omega_k^{(m)}x^k}{\sum_{k=1}^N\omega_k^{(m)}}\right)-F(x^*)\right] \\ 
    &\le&\sum_{k=1}^N\omega_k^{(m)}(F(x^k)-F(x^*)) \\ 
    &\le&\sum_{k=1}^N\omega_k^{(m)}(f(x^k)+h(x^{k+1})-F(x^*))+\omega_1^{(m)}h(x^1)
    \\
    &\le&R\frac{\omega_N^{(m)}}{\gamma_N}+\sum_{k=1}^N\frac{\omega_k^{(m)}\gamma_k}{2\sigma}\|\nabla f(x^k)\|_*^2+\omega_1^{(m)}h(x^1).
\end{eqnarray*}
According to the analysis in \eqref{minor_m}, we know that the result below has been established when $-1\le m\le0$
\begin{align*}
    & \quad \frac{R\frac{\omega_N^{(m)}}{\gamma_N}+\sum_{k=1}^N\frac{\omega_k^{(m)}\gamma_k}{2\sigma}\|\nabla f(x^k)\|_*^2}{\sum_{k=1}^N\omega_k^{(m)}}
    \\& \le\sqrt{\frac{R}{2\sigma}}\cdot\frac{N^{\frac{m+1}{2}}+\sum_{k=1}^Nk^{\frac{m-1}{2}}}{\sum_{k=1}^Nk^{\frac{m}{2}}}\cdot\max_{k=1,\ldots,N}\Vert\nabla f(x^k)\Vert_*. 
\end{align*}
Recall that $G_k$ can be rewritten as \eqref{rewrite_GK}, so we get
\begin{align*}
    \frac{\omega_1^{(m)}h(x^1)}{\sum_{k=1}^N\omega_k^{(m)}}&=\frac{\frac{1}{\gamma_k^m}h(x^k)}{\sum_{k=1}^N\frac{1}{\gamma_k^m}}=\frac{G_1^mh(x^1)}{\sum_{k=1}^N(G_kk^\frac{{a}}{2})^m}
    \\&=\frac{G_1^mh(x^1)}{\sum_{k=1}^N\left(\max\limits_{j=1,\ldots,k}(\Vert\nabla f(x^j)\Vert_*\cdot j^{\frac{1-{a}}{2}})k^\frac{{a}}{2}\right)^m}
    \\& \le\frac{\|\nabla f(x^1)\|_*^mh(x^1)}{\sum_{k=1}^N\left(\max\limits_{k=1,\ldots,N}\Vert\nabla f(x^k)\Vert_*\cdot k^{\frac{1-{a}}{2}}\cdot k^\frac{{a}}{2}\right)^m}
    \\& =\left(\frac{\|\nabla f(x^1)\|_*}{\max\limits_{k=1, \ldots,N}\Vert\nabla f(x^k)\Vert_*}\right)^m\cdot\frac{h(x^1)}{\sum_{k=1}^Nk^{\frac{m}{2}}}.
\end{align*}
Thus, by setting $\widehat{x}  = \frac{1}{\sum_{k=1}^N\omega_k^{(m)}} \sum_{k=1}^N\omega_k^{(m)}x^k$,  we derive the following inequality
\begin{align*}
    F (\widehat{x}) -F(x^*) & \le\sqrt{\frac{R}{2\sigma}} \cdot\frac{N^{\frac{m+1}{2}} + \sum_{k=1}^N k^{\frac{m-1}{2}}}{\sum_{k=1}^Nk^{\frac{m}{2}}}\cdot\max_{k=1,\ldots,N}\Vert\nabla f(x^k)\Vert_*
    \\& \quad 
    +\left(\frac{\|\nabla f(x^1)\|_*}{\max\limits_{k=1,\ldots,N}\Vert\nabla f(x^k)\Vert_*}\right)^m \cdot\frac{h(x^1)}{\sum_{k=1}^N k^{\frac{m}{2}}}.
\end{align*}
This completes the proof.\qed
\end{proof}

By setting $m = 0$, we can obtain the following corollary, which reveals the optimal convergence rate of the composite mirror descent method under the adaptive time-varying step size rules.
\begin{corollary}\label{linear_ergodic_result_com}
Let $f$ be a convex continuous function and $h$ be a non-negative convex function, then for problem \eqref{composite}, by Algorithm \ref{alg_mirror_descent_com}, for any fixed ${a}\in[0,1]$ and $m=0$, with the step sizes \eqref{step_size}, it satisfies the following inequality
\begin{equation}\label{linear_eq}
    F\left(\frac{1}{N} \sum_{k=1}^Nx^k \right)-F(x^*)\le3\sqrt{\frac{R}{2\sigma}}\cdot\frac{1}{\sqrt{N}}\cdot\max_{k=1,\ldots,N}\Vert\nabla f(x^k)\Vert_*+\frac{h(x^1)}{N}.
\end{equation}
\end{corollary}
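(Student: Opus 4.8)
The plan is to obtain Corollary \ref{linear_ergodic_result_com} as the special case $m=0$ of Theorem \ref{comsite_ergodic}, which is legitimate since $m=0$ lies in the admissible range $-1\le m\le0$ of that theorem. First I would evaluate the weak ergodic weights at $m=0$: from the definition \eqref{weak_coeff} (the branch $-1\le m\le0$) we get $\omega_k^{(0)}=1/\gamma_k^{0}=1$ for every $k$. Consequently the weighted average collapses to the plain Ces\`aro average, $\widehat{x}=\frac{1}{\sum_{k=1}^N\omega_k^{(0)}}\sum_{k=1}^N\omega_k^{(0)}x^k=\frac{1}{N}\sum_{k=1}^N x^k$, which is exactly the point appearing on the left-hand side of \eqref{linear_eq}.

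Next I would substitute $m=0$ into the two-term bound of Theorem \ref{comsite_ergodic}. In the first term the exponents simplify to $N^{1/2}$, $\sum_{k=1}^N k^{-1/2}$, and $\sum_{k=1}^N k^{0}=N$; in the second term the prefactor $\bigl(\|\nabla f(x^1)\|_*/\max_k\|\nabla f(x^k)\|_*\bigr)^0=1$ and again $\sum_{k=1}^N k^{0}=N$, so that term reduces to $h(x^1)/N$. The only genuine estimate needed is the elementary harmonic-type bound $\sum_{k=1}^N k^{-1/2}\le 2\sqrt{N}$, which I would justify by the comparison $k^{-1/2}\le\int_{k-1}^{k}x^{-1/2}\,dx$ (valid since $x^{-1/2}$ is decreasing), summed to give $\int_{0}^{N}x^{-1/2}\,dx=2\sqrt{N}$.

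Putting these together, the numerator of the first term obeys $N^{1/2}+\sum_{k=1}^N k^{-1/2}\le \sqrt{N}+2\sqrt{N}=3\sqrt{N}$, so after dividing by $N$ the first term is at most $3\sqrt{\tfrac{R}{2\sigma}}\cdot N^{-1/2}\cdot\max_{k=1,\ldots,N}\|\nabla f(x^k)\|_*$; adding the composite contribution $h(x^1)/N$ reproduces \eqref{linear_eq} verbatim.

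Since the argument is a direct specialization, there is no real obstacle beyond careful bookkeeping; the one point deserving a line of care is confirming that the weights are identically one at $m=0$ (so that the average is genuinely uniform and $\sum_k\omega_k^{(0)}=N$), and that the constant $3$ arises precisely from the bound on $\sum_{k=1}^N k^{-1/2}$. It is worth remarking that the exponent $a$ has already cancelled inside Theorem \ref{comsite_ergodic}, so the corollary holds uniformly for every $a\in[0,1]$ without any additional work.
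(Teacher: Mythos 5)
Your proposal is correct and follows exactly the route the paper intends: Corollary \ref{linear_ergodic_result_com} is obtained by specializing Theorem \ref{comsite_ergodic} to $m=0$, where the weights $\omega_k^{(0)}=1$ collapse $\widehat{x}$ to the uniform average, the prefactor on the $h(x^1)$ term becomes $1$, and the constant $3$ comes from the integral bound $\sum_{k=1}^N k^{-1/2}\le 2\sqrt{N}$. Your bookkeeping (including the observation that $a$ no longer appears) matches the paper's implicit argument.
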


\begin{remark}
In \cite[Corollary 4.3]{alkousa2024mirror}, it is required to select the initial point \(x^1\) satisfying \(h(x^1) = 0\), which is not necessary for our theory.
\end{remark}
\begin{remark}
     As revealed by the formula of the corollary, if we can separate a non-negative convex function in a convex optimization problem, then the part corresponding to the non-negative convex function $h(x)$ will converge at a faster rate $O(1/N)$, and this perspective is not mentioned in \cite{alkousa2024mirror}.
\end{remark}

\section*{Conclusion}

In this work, we correct the proofs in the part of the recent study \cite{alkousa2024mirror} connected with the analysis of the convergence rate of the mirror descent method and composite mirror descent method under adaptive time-varying step size rules. Furthermore, we introduce a Lipschitz-free mirror descent algorithm that achieves weak ergodic convergence, extending the results of mirror descent beyond the traditional Lipschitz continuity setting. This advancement broadens the understanding of mirror descent methods and opens new possibilities for their application in non-smooth and non-Lipschitz optimization problems.

\section*{Funding}
This research was supported by the National Natural Science Foundation of China under grant 12171021, and the Fundamental Research Funds for the Central Universities.

\section*{Data Availability}
The manuscript has no associated data.

\end{document}